\theoremstyle{plain}
\newtheorem{lemma}{Lemma}[section]
\newtheorem{thm}[lemma]{Theorem}
\newtheorem*{thm1}{Theorem \ref{main}}
\newtheorem*{thm2}{Theorem \ref{relative}}
\theoremstyle{remark}
\newtheorem{remark}[lemma]{Remark}
\theoremstyle{definition}
\newtheorem{definition}[lemma]{Definition}
\newtheorem{ej}[lemma]{Example}
\def\u#1{U_{#1}}
\def\ut#1{\hat{U}_{#1}}
\def\set#1{\{#1\}}
\def\k{\mathcal{K}}
\def\x{\mathcal{X}}
\def\h{\mathcal{H}}
\def\R{\mathbb{R}}
\def\ti#1{\tilde{#1}}
\begin{document}

\title[Morse theory of Bestvina-Brady type]{Morse theory of Bestvina-Brady type for posets and matchings}
\author[E.G. Minian]{El\'\i as Gabriel Minian}

\address{Departamento  de Matem\'atica-IMAS\\
 FCEyN, Universidad de Buenos Aires\\ Buenos
Aires, Argentina.}

\email{gminian@dm.uba.ar}

\thanks{Researcher of CONICET. Partially supported by grants PICT 2017-2997, PICT 2019-2338 and UBACYT 20020190100099BA}

\begin{abstract}
We introduce a Morse theory for posets of Bestvina-Brady type combining matchings and height functions. This theory generalizes Forman's discrete Morse theory for regular CW-complexes and extends previous results on Morse theory for $h$-regular posets to all finite posets. We also develop a relative version of Morse theory which allows us to compare the topology of a poset with that of a given subposet. 
\end{abstract}

\subjclass[2020]{55U05, 55P15, 57Q05, 57Q70, 06A06, 55N31.}

\keywords{Morse Theory, Simplicial Complexes, Finite Topological Spaces, Posets, Matchings.}

\maketitle

\section{Introduction}

Classical Morse theory is a tool that allows us to study the topology of differentiable manifolds by analyzing smooth real-valued functions. For instance, the homotopy type of a manifold $M$ can be described in terms of the critical points of a Morse function $f:M\to\R$. More concretely, $M$ has the homotopy type of a CW-complex with one cell of dimension $k$ for each critical point of index $k$. Applications of this theory in mathematics and physics abound. Just to name a few: it is used in in the classification of manifolds, in Bott periodicity \cite{bott} and in the h-cobordism theorem  \cite{mil2}. An excellent reference for classical Morse theory is Milnor's book \cite{mil1}. In the nineties Forman introduced a discrete version of Morse theory for regular CW-complexes, described in terms of discrete analogues of Morse functions and gradient vector fields \cite{for,for2,for3}. In Forman's setting, the Morse functions assign a real number to each cell of the regular complex. There are many applications of Forman's theory in combinatorics, topology, algebra, dynamical systems and topological data analysis (see for example \cite{bw,mn,mro,sha}). It is well known that Forman's theory for regular CW-complexes can  be formulated in terms of acyclic matchings on the Hasse diagrams of the associated face posets. This is an observation of Chari \cite{cha}, which is essentially a combinatorial description of the gradient vector field associated to a discrete Morse function (see also \cite{for3}). A Morse matching on the Hasse diagram of the face poset of a regular complex encodes the same information as the Morse function on the complex. In \cite{min} we extended Forman's theory to $h$-regular posets, adapting Chari's construction to a more general context, although in the non-regular setting one cannot use that the edges of the matchings correspond to collapses.

Bestvina-Brady version of Morse theory applies to affine cell complexes \cite{bb}. In this case, a Morse function is a PL-function defined on the cell complex and the critical points are the vertices of the complex. To understand how the topology of the complex changes when a critical level is crossed, one studies the descending links of the vertices (these are the directions in which the function decreases). In contrast to the classical case of manifolds, where the decreasing directions form a sphere, in Bestvina-Brady theory the descending links may have different homotopy types. When crossing a critical level, the homotopy type changes by adding cones on the descending links of the critical points at that level. The first powerful applications of this theory are related to topological and cohomological aspects of groups \cite{bb}, but there are also applications in combinatorics, algebra, topology, geometric group theory and topological data analysis (see for example \cite{bes,bux,zar}). A very nice survey is Bestvina's article \cite{bes}. 
Recently Zaremsky observed that Forman's theory can be seen as a particular case of Bestvina-Brady Morse theory  \cite{zar}. Concretely, if $f$ is a Forman's Morse function defined on the cells of regular CW-complex $L$, which we can take injective, the PL function $L'\to\R$ induced on the barycentric subdivision $L'$, that coincides with $f$ on the vertices of $L'$ (=cells of $L$), is a Morse function in the sense of Bestvina-Brady. Moreover, in that case, the descending links of the critical vertices of $L'$ (i.e. the critical cells of $L$) are spheres, and the non-critical cells of $L$, viewed as vertices of $L'$, have contractible descending links. This implies Forman's result, since gluing a cone on a contractible space does not change the homotopy type and gluing a cone on a sphere corresponds to attaching a cell (see \cite{zar} for more details). Some advantages of Forman's theory are its simplicity and that it allows to find, via matchings (or discrete gradient fields), Morse functions with few critical points. On the other hand, Bestvina-Brady's theory can be a applied in a more general setting.

In this paper we combine both approaches to extend Forman's theory to all finite posets (not only regular or $h$-regular).  We use height-type functions and admissible Morse matchings to describe and analyze the homotopy type of the order complex of any finite poset in terms of gluing cones on the descending links of the non-critical points. The matching is used to discard the points whose descending links are contractible and the function provides a convenient ordering for gluing the cones. When the poset is regular, i.e. if it is the face poset of a regular CW-complex, and the Morse function is the usual height function of the poset, we recover Forman's theory. If the poset is $h$-regular (and the function is the height function) we obtain our previous result of \cite{min}. In the general case, the homotopy types of the descending links may be contractible, models of spheres, or have different homotopy types.
 
Our main result is the following.
\begin{thm1}
	Let $f:X\to \R$ be a Morse function and let $M$ be an admissible Morse matching for $f$. Then the order complex $\k(X)$ is homotopy equivalent to a CW-complex constructed by inductively gluing cones over the (order complexes of) descending links of the critical points. Concretely, if $t_0<t_1<\ldots<t_r \in\R$ are the values of $f$ and $C_i$ denotes the set of critical points $x\in X$ with $f(x)=t_i$, there is a filtration of $X$ by subposets $X_0\subseteq \ti X_0\subseteq X_1\subseteq \ldots \subseteq X_i\subseteq \ti X_i\subseteq \ldots \subseteq X_r=X$, where $\k(X_i)=\k(\ti X_{i-1})\bigcup_{x \in C_i} x\k(\ut x)$, and $\k(X_i)\subseteq \k(\ti X_i)$ is a strong deformation retract for all $i$.
\end{thm1}
In particular, $\k(X_i)$ is homotopy equivalent to $\k(X_{i-1})\bigcup_{x \in C_i} x\k(\ut x)$, where the cone  $x\k(\ut x)$ over the descending link of each critical point is glued via the deformation retraction $\k(X_{i-1})\subseteq \k(\ti X_{i-1})$. We include various examples to illustrate how our theory applies. We also develop a relative version of the theory which compares the topology of a poset with that of a given subposet.

 \begin{thm2}
 	Let $A\subset X$ be a down-set, $f:A^c\to \R$ a Morse function, and $M$ an admissible Morse matching for $f$. Then the order complex $\k(X)$ is homotopy equivalent to a CW-complex constructed from $\k(A)$ by inductively gluing cones over the descending links of the critical points. Concretely, if $t_0<t_1<\ldots<t_r \in\R$ are the values of $f$ and $C_i$ denotes the set of critical points $x\in A^c$ with $f(x)=t_i$, there is a filtration of $X$ by subposets
 	$A=\ti X_{-1}\subseteq X_0\subseteq \ti X_0\subseteq X_1\subseteq \ldots \subseteq X_i\subseteq \ti X_i\subseteq \ldots \subseteq X_r=X$, where $\k(X_i)=\k(\ti X_{i-1})\bigcup_{x \in C_i} x\k(\ut x)$, and $\k(X_i)\subseteq \k(\ti X_i)$ is a strong deformation retract for all $i$.
 \end{thm2}  

In this paper, all the posets and CW-complexes that we deal with, are assumed to be finite.

\section{Preliminaries}

Let $X$ be a poset. Given $x,y\in X$, we write $x\prec y$ if $x$ is covered by $y$, i.e. if $x<y$ and there is no $z\in X$ such that $x<z<y$. Recall that the Hasse diagram $\h(X)$ of a poset $X$ is the digraph whose vertices are the points of $X$ and whose edges are the pairs $(x,y)$ such that $x\prec y$. When we represent $\h(X)$ graphically, instead of writing  the edge $(x,y)$ with an arrow from $x$ to $y$, we simply put $y$ over $x$ (see Figure \ref{nuevo}). The height $h(X)$ of a poset $X$ is the maximum of the lengths of the chains in $X$, where the length of a chain $x_0<x_1<\ldots<x_n$ is $n$. The height $h(x)$ of an element $x\in X$ is the height of the subposet $\u x=\{y\in X, \  y\le x\}$.

Any poset has an intrinsic topology, where the open sets are the down-sets (see for example \cite{b1,bm1,mcc,sto}). Recall that a subposet $U\subseteq X$ is a down-set if for every $x\in U$ and $y\in X$ such that $y\leq x$, then $y\in U$. A basis for the topology on $X$ is $\set{\u x}_{x\in X}$. Note that $\u x$ is the minimal open set containing $x$. By a result of McCord \cite{mcc}, a poset $X$, viewed as a finite topological space, is weak homotopy equivalent to its order complex  $\k(X)$. Recall that $\k(X)$ is the simplicial complex of non-empty chains of $X$. Concretely, there is a weak homotopy equivalence $\mu:\k(X)\to X$ (i.e. a continuous map inducing isomorphisms in all homotopy groups). In particular $X$ and $\k(X)$ have the same homotopy groups and homology groups. In this paper we will not adopt the finite space point of view and we will understand the topology of $X$ via the simplicial complex $\k(X)$. 

  Given a finite regular CW-complex $L$, the face poset $\x(L)$ is the poset of cells of $L$ ordered by inclusion. Note that $\k(\x(L))=L'$,  the barycentric subdivision of $L$. We say that $X$ is regular if $X=\x(L)$ for some regular CW-complex $L$. Note that, in this case, for any $x\in X$, $\k(\u x)$ is homeomorphic to a sphere of dimension $h(x)-1$, since it is the barycentric subdivision of the boundary of the cell $x$. A poset $X$ is called $h$-regular  if for every $x\in X$, $\k(\u x)$ is homotopy equivalent to a sphere of dimension $h(x)-1$ (see \cite{min}). In particular, regular posets are $h$-regular.  We say that a poset $X$ is a model of CW-complex $K$ if $\k(X)$ is homotopy equivalent to $K$. In particular, $X$ is a model of $\k(X)$ and $\x(K)$ is a model of $K$ (and $K'$) for every regular CW-complex $K$.

In order to extend Forman's theory to all posets (not only regular or $h$-regular) we use height-type functions and admissible matchings. When $X$ is regular and the function is the usual height function, we recover Forman's theory. In the case that $X$ is $h$-regular and the function is the usual height, we obtain our previous results of \cite{min}.

\begin{definition}
Let $X$ be a poset. A function $f:X\to\R$ is a \it Morse function \rm if for every $x\prec y$ in $X$, $f(x)<f(y)$. 
\end{definition}

Note that the usual height function $h:X\to\R$ is a Morse function. Given $x\in X$, we denote $\ut x=\{y\in X, y<x\}=\u x-\set{x}$. The subposet $\ut x$ will be called the \it descending link \rm  of $x$.

\begin{definition}
	Let $f:X\to\R$ be a Morse function. An edge $(x,y)\in \h(X)$ is called \it admissible for $f$ \rm if it satisfies the following two conditions:
	\begin{enumerate}
		\item $\ut y-\set{x}$ is homotopically trivial (i.e. $\k(\ut y-\set{x})$ is contractible), 
		\item there is no $z\in X$ such that $f(x)<f(z)<f(y)$.
	\end{enumerate}
\end{definition}

\begin{remark}
	If $X$ is a regular poset, all edges of $X$ are admissible for the usual height function $h$. If $X$ is $h$-regular, an edge is admissible for the usual height function if it satisfies condition $(1)$ (cf. \cite[Lemma 2.8]{min}).
\end{remark}

Let $\h(X)$ be the Hasse diagram of a poset $X$. Following Chari \cite{cha}, given a matching $M$ on $\h(X)$,  we denote by $\h_M(X)$ the directed graph obtained from $\h(X)$ by reversing the orientations of the edges which are not in $M$. The matching is called a \it Morse matching \rm if the directed graph $\h_M(X)$ is acyclic. The points of $\h(X)$ not incident to any edge in $M$ are called \it critical. \rm  Given a Morse function $f:X\to\R$, a Morse matching $M$ is called \it admissible for $f$ \rm if every edge in $M$ is admissible for $f$. By the previous remark, any Morse matching on a regular poset is admissible for the usual height function.

\begin{ej} Figure \ref{nohreg} shows a non $h$-regular poset with a Morse function, which is not the usual height function, and an admissible Morse matching. The values of the function in each point are indicated in the figure and the edges of the matching are represented with dashed arrows. Note that the same Morse matching would not be admissible for the usual height function. In Example \ref{primerejemplo} we will see that the order complex of this poset is homotopy equivalent to $S^1\vee S^2$.

	\begin{figure}[h]
		\begin{displaymath}
			\xymatrix@C=10pt{
				 3\bullet  \ar@{-}[d]  \ar@{-}[drr]  \ar@{-}[drrrr]& &  3 \bullet \ar@{-}[d]  & &  & & 2\bullet   \ar@{-}[dllllll] \\
				1\bullet \ar@{-}[drr] & & 2\bullet  \ar@{-}[drr] \ar@{-}[d] \ar@{-}[dll]
				&& 2\bullet  \ar@{-->}[ull] \ar@{-}[dllll] \ar@{-}[dll] \\
				0\bullet  \ar@{-->}[u] && 0\bullet && 1\bullet \ar@{-->}[uurr]\\
			}
		\end{displaymath}
		\caption{An admissible Morse matching for a Morse function.\label{nuevo}}
	\end{figure}
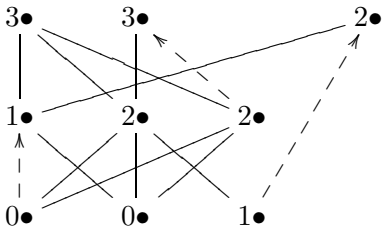

\end{ej}

\section{Main results}

 If $X$ is a regular poset and $f$ is the usual height function, the edges of the matching correspond to collapses of regular cells (cf. \cite{cha,for,for3}). In the general context this is not longer true and we use the following lemma to circumvent this problem. Recall that $\h_M(X)$ is the directed graph obtained from $\h(X)$ by reversing the orientations of the edges which are not in the matching $M$.

\begin{lemma}\label{mainlemma}
Let $f:X\to\R$ be a Morse function and let $M$ be an admissible Morse matching for $f$. If $x_0,\ldots,x_r$ is a directed path in $\h_M(X)$, then there is no $z\in X$ such that $f(x_0)<f(z)<f(x_r)$.
\end{lemma}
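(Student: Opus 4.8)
The plan is to induct on the length $r$ of the directed path. For $r=0$ there is nothing to prove, and for $r=1$ the statement is exactly one of the following two cases: either the edge $(x_0,x_1)$ goes ``upward'' in $\h_M(X)$ (so it is the reverse of a covering relation, meaning $x_1 \prec x_0$ in $X$ and the edge is \emph{not} in $M$), or it goes ``downward'' (so $x_0 \prec x_1$ in $X$ and the edge \emph{is} in $M$). In the first subcase $f(x_1) < f(x_0)$ since $f$ is a Morse function, so there is certainly no $z$ with $f(x_0) < f(z) < f(x_1)$ (the interval is empty). In the second subcase $(x_0, x_1) \in M$ is admissible for $f$, so condition $(2)$ of admissibility gives precisely that there is no $z$ with $f(x_0) < f(z) < f(x_1)$.

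For the inductive step I would split the path $x_0, x_1, \ldots, x_r$ at $x_1$ and apply the inductive hypothesis to the sub-path $x_1, \ldots, x_r$, obtaining: there is no $z$ with $f(x_1) < f(z) < f(x_r)$. Combined with the $r=1$ case applied to the edge $(x_0,x_1)$, I get: there is no $z$ with $f(x_0) < f(z) < f(x_1)$ and no $z$ with $f(x_1) < f(z) < f(x_r)$. To glue these two ``no intermediate value'' statements into a single one over the interval $(f(x_0), f(x_r))$, I need to control how $f(x_1)$ sits relative to $f(x_0)$ and $f(x_r)$. The key observation is that consecutive vertices of a directed path in $\h_M(X)$ satisfy $f(x_0) < f(x_1)$: indeed the edge $(x_0,x_1)$ is either a matching edge $x_0 \prec x_1$ (where $f(x_0)<f(x_1)$ because $f$ is Morse) or a reversed edge $x_1 \prec x_0$ — but wait, in that second case $f(x_1) < f(x_0)$, which would break monotonicity, so I must instead observe that the correct monotone quantity along a directed path in $\h_M(X)$ is not $f$ itself but something like $f$ on down-steps and the pair $(h, f)$ in general; more carefully, along any directed path $f(x_{i+1}) \ge f(x_i)$ fails, so the right invariant to track is that the \emph{overall} path still has $f(x_0) \le f(x_r)$ because $\h_M(X)$ is acyclic and the steps that decrease $f$ are exactly reversed covering relations which strictly decrease height. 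I will reconcile this by noting: each matching edge strictly increases $f$, each reversed edge strictly decreases $h$ hence cannot be undone, and acyclicity forces the path to be ``eventually rising,'' so in fact $f(x_i)$ can be bracketed between $f(x_0)$ and $f(x_r)$ at every intermediate index. Granting $f(x_0) \le f(x_1) \le f(x_r)$ (the generic and relevant situation; the remaining configurations are handled by the same two base cases since the relevant open interval only shrinks), any $z$ with $f(x_0) < f(z) < f(x_r)$ would have to satisfy either $f(x_0) < f(z) < f(x_1)$ or $f(z) = f(x_1)$ or $f(x_1) < f(z) < f(x_r)$; the first and third are excluded by the two statements above, and the middle — is $z$ allowed to satisfy $f(z)=f(x_1)$? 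The statement only forbids strict intermediate values, so $f(z)=f(x_1)$ is fine and not a contradiction: thus there simply is no $z$ with strict inequalities, which is what we want.

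The main obstacle, and the step I would spend the most care on, is precisely the bracketing $f(x_0) \le f(x_i) \le f(x_r)$ (or whatever the correct precise statement turns out to be) for intermediate vertices of a directed path in $\h_M(X)$. The subtlety is that $f$ is not monotone along directed paths of $\h_M(X)$ — reversed (non-matching) edges decrease $f$ — so one cannot just telescope inequalities. I expect the clean route is to prove the $r=1$ base case exactly as above and then argue the inductive step by a ``peeling'' argument that removes either the first or the last edge depending on whether it is a matching edge or a reversed edge, together with the acyclicity of $\h_M(X)$ to guarantee the process terminates with the endpoints' $f$-values as the controlling bounds; an alternative is a direct induction on $r$ that at each step compares $f(x_1)$ with $f(x_0)$ and $f(x_r)$ and observes that whichever way the comparison falls, the union of the two forbidden open intervals already covers $(f(x_0), f(x_r))$. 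Once that bracketing is in hand, the conclusion follows by the elementary interval argument sketched above.
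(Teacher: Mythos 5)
Your base case ($r=1$) is correct and identical to the paper's. The inductive step, however, has a genuine gap, and it sits exactly where you suspected it would. First, the bracketing $f(x_0)\le f(x_i)\le f(x_r)$ you propose to ``grant'' is false in general: a directed path in $\h_M(X)$ can begin with a reversed edge (so $f(x_1)<f(x_0)$), and after a matching edge the path must descend, so intermediate vertices routinely dip below $f(x_0)$; the appeals to acyclicity and ``eventually rising'' do not repair this and are not the right invariant. Second, and more seriously, your dismissal of the case $f(z)=f(x_1)$ is backwards. If $f(x_0)<f(x_1)<f(x_r)$, then a point $z$ with $f(z)=f(x_1)$ --- for instance $z=x_1$ itself --- \emph{does} satisfy the strict inequalities $f(x_0)<f(z)<f(x_r)$ and would be a counterexample to the lemma. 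This configuration is precisely what must be ruled out; it is the entire content of the inductive step, and your two forbidden open intervals $(f(x_0),f(x_1))$ and $(f(x_1),f(x_r))$ cannot cover it.

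The missing ingredient is the matching property of $M$. In the problematic case the first edge is a matching edge $(x_0,x_1)\in M$ (otherwise $f(x_1)<f(x_0)$ and your interval argument already closes the case). Since $x_1$ is then matched to $x_0$, the next edge of the path cannot lie in $M$, so it is a reversed edge: $x_2\prec x_1$ and $f(x_2)<f(x_1)$. Applying the inductive hypothesis to the path $x_2,\ldots,x_r$ with the point $x_1$ shows $f(x_r)\le f(x_1)$, which kills the case $f(z)=f(x_1)<f(x_r)$. The paper runs the symmetric argument at the other end of the path: it first deduces that any offending $z$ must have $f(z)=f(x_{r-1})$, concludes that $(x_{r-1},x_r)\in M$ and hence (matching!) that $x_{r-1}\prec x_{r-2}$, and then contradicts the inductive hypothesis for $x_0,\ldots,x_{r-2}$ using $z=x_{r-1}$. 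Either version works, but without this use of the matching condition your ``elementary interval argument'' does not close, so as written the proof is incomplete.
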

\begin{proof}
We proceed by induction on $r$. If $r=1$, there are two cases. If $x_0\prec x_1$, then $(x_0,x_1)\in M$ and the results follows from the definition of admissible edge. If $x_1\prec x_0$, the result follows from the definition of Morse function.

Given a directed path  $x_0,\ldots,x_r$, suppose there is a point $z\in X$ such that $f(x_0)<f(z)<f(x_r)$. By induction applied to the directed paths $x_0,\ldots,x_{r-1}$ and $x_{r-1},x_r$, we deduce that $f(z)=f(x_{r-1})$. Since $x_0,\ldots,x_r$ is a directed path and $f(x_{r-1})=f(z)<f(x_r)$, then $(x_{r-1},x_r)\in M$ and $x_{r-1}\prec x_{r-2}$, since $M$ is a matching. Therefore we get $f(x_0)<f(x_{r-1})<f(x_{r-2})$, which is a contradiction. 
\end{proof}

Our main result asserts that a Morse function together with an admissible Morse matching allows us to understand the topology of $X$ by giving a convenient order to its points and looking how the topology changes when we add criticial points. The topology of the poset eventually changes when we cross critical levels. If $L$ is a simplicial complex and $x$ is a vertex which is not in $L$, we denote by $xL$ the simplicial cone with apex $x$ and base $L$.

\begin{thm}\label{main}
Let $f:X\to \R$ be a Morse function and let $M$ be an admissible Morse matching for $f$. Then the order complex $\k(X)$ is homotopy equivalent to a CW-complex constructed by inductively gluing cones over the (order complexes of) descending links of the critical points. Concretely, if $t_0<t_1<\ldots<t_r \in\R$ are the values of $f$ and $C_i$ denotes the set of critical points $x\in X$ with $f(x)=t_i$, there is a filtration of $X$ by subposets $X_0\subseteq \ti X_0\subseteq X_1\subseteq \ldots \subseteq X_i\subseteq \ti X_i\subseteq \ldots \subseteq X_r=X$, where $\k(X_i)=\k(\ti X_{i-1})\bigcup_{x \in C_i} x\k(\ut x)$, and $\k(X_i)\subseteq \k(\ti X_i)$ is a strong deformation retract for all $i$.
\end{thm}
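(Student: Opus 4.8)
The plan is to build the filtration level by level and, within each level $t_i$, to split the passage from $\k(\ti X_{i-1})$ to $\k(\ti X_i)$ into two moves: first glue the cones on the descending links of the critical points in $C_i$ to obtain $\k(X_i)$, and then enlarge $X_i$ to $\ti X_i$ by adding the matched pairs at level $t_i$ in such a way that $\k(X_i)\hookrightarrow\k(\ti X_i)$ is a strong deformation retract. Concretely, I would set $X_i = \ti X_{i-1}\cup C_i$ as a subposet of $X$ (with $\ti X_{-1}=\varnothing$, say), and let $\ti X_i$ be $X_i$ together with every $y\in X$ with $f(y)\le t_i$ that is matched, i.e. incident to some edge of $M$; by Lemma~\ref{mainlemma} applied to the edge $(x,y)\in M$, condition~(2) of admissibility forces $f(x)$ and $f(y)$ to lie in consecutive values, so the matched pair $(x,y)$ is entered into the filtration at the level $t_i=f(y)$ and this is well defined. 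One must check that each $X_i$ and $\ti X_i$ is genuinely a subposet (automatic, since any subset of a poset is) and that the $X_i$ and $\ti X_i$ interleave and exhaust $X$, which follows because every point is either critical (enters in some $C_i$) or matched (enters in some $\ti X_i$).

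For the first move, I would identify $X_i$ as $\ti X_{i-1}$ with the points of $C_i$ adjoined, and observe that for $x\in C_i$ the minimal open set is $\u x = \ut x \cup\{x\}$, so in the order complex the star of the vertex $x$ is the cone $x\,\k(\ut x)$, and its link is exactly $\k(\ut x)$. The key point is that $\ut x\subseteq \ti X_{i-1}$: indeed every $y<x$ satisfies $f(y)<f(x)=t_i$, hence $f(y)\le t_{i-1}$, and such a $y$ is either critical (so $y\in C_j\subseteq X_j\subseteq\ti X_{i-1}$ for some $j<i$) or matched and thus in some $\ti X_j$ with $j\le i-1$; here one uses Lemma~\ref{mainlemma} once more to rule out a matched $y$ being deferred to a later level. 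Since the points of $C_i$ form an antichain in $X$ (two elements of $C_i$ have the same $f$-value, so neither covers a chain down to the other — more carefully, if $y<x$ with both in $C_i$ then $f(y)<f(x)$, a contradiction), their stars meet $\k(\ti X_{i-1})$ precisely along their links and meet each other only inside $\k(\ti X_{i-1})$, giving the simultaneous pushout $\k(X_i)=\k(\ti X_{i-1})\bigcup_{x\in C_i} x\,\k(\ut x)$ on the nose.

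For the second move, I would show $\k(X_i)\subseteq\k(\ti X_i)$ is a strong deformation retract. The set $\ti X_i\setminus X_i$ consists of matched pairs $(x,y)\in M$ with $f(y)=t_i$; enumerate these pairs as $(x_1,y_1),\dots,(x_m,y_m)$ in an order that is a linear extension of the acyclic digraph $\h_M(X)$ restricted to them (this is where acyclicity of the Morse matching is used), and add them one pair at a time. At the step where $(x_k,y_k)$ is added, the relevant local configuration in the order complex is controlled by admissibility condition~(1): $\k(\ut{y_k}-\{x_k\})$ is contractible, which is exactly the hypothesis needed to perform an elementary expansion/collapse-type deformation retraction that absorbs the two new vertices $x_k,y_k$ — one pushes the cone-like region $y_k\bigl(x_k\,\k(\ut{y_k}-\{x_k\})\bigr)$ onto its outer face. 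I expect this local deformation-retraction lemma to be the technical heart, and it is essentially the non-regular analogue of the fact that in the regular case the matched edge corresponds to an elementary collapse; the main obstacle is checking that when pairs are added in the chosen order, the previously-added vertices $x_1,y_1,\dots,x_{k-1},y_{k-1}$ do not interfere, i.e. that the set $\ut{y_k}$ computed inside $\ti X_i$ agrees with what is needed, which again is guaranteed by Lemma~\ref{mainlemma} (no $z$ with intermediate $f$-value exists along directed paths) together with the linear extension ordering. Composing the finitely many strong deformation retractions gives $\k(X_i)\subseteq\k(\ti X_i)$, and chaining the moves over $i=0,\dots,r$ yields the stated filtration with $\k(X)=\k(\ti X_r)\simeq\k(X_r)$ and the inductive cone-gluing description.
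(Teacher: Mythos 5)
Your overall strategy (handle each $f$-level in two moves, use that $C_i$ is an antichain to get the pushout formula for $\k(X_i)$, and use acyclicity of $\h_M(X)$ to order the matched pairs) is the same as the paper's, which runs the construction top-down by deletion and then reverses it; but two of your steps do not work as stated. First, the filtration is indexed incorrectly: a matched pair $(x,y)\in M$ must enter the filtration at the level of its \emph{lower} endpoint, i.e.\ both $x$ and $y$ are adjoined between $X_i$ and $\ti X_i$ where $t_i=f(x)$ (so the paper's $\ti X_i$ contains points of $f$-value $t_{i+1}$). You instead declare that the pair enters at $t_i=f(y)$, and with that convention the identity $\k(X_i)=\k(\ti X_{i-1})\bigcup_{x\in C_i}x\k(\ut x)$ fails: if $z\prec c$ with $c\in C_i$ critical and $z$ is the lower endpoint of a matching edge $(z,z')$ with $f(z')=t_i$, then $z\in\ut c$ but $z\notin\ti X_{i-1}$, so the simplex $\set{z,c}$ of $c\k(\ut c)$ is missing from $\k(X_i)$. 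If one instead reads your definition of $\ti X_i$ literally (all matched points of $f$-value $\le t_i$, i.e.\ the sublevel set), the pushout survives but the other claim fails: $\ti X_i\setminus X_i$ then contains lower endpoints $x$ whose partners sit at level $t_{i+1}$ and are absent, and adjoining such an $x$ glues a cone over $\k(\ut x)$, which is not contractible in general (for a matched cell of a regular complex it is a sphere), so $\k(X_i)\subseteq\k(\ti X_i)$ need not be a deformation retract.

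Second, the step you yourself flag as the technical heart --- that adjoining a matched pair is a strong deformation retraction --- is asserted rather than proved, and the ordering you propose goes the wrong way. The paper's argument repeatedly picks a source node $x$ of $\h_M(X)$ of maximal $f$-value and shows, via Lemma \ref{mainlemma}, that either $x$ is critical and maximal, or $x$ has a unique cover $y$ with $(x,y)\in M$ and $y$ maximal in the current poset; then $x$ is a beat point (deleting it is a strong deformation retraction) and afterwards $y$ is maximal with descending link $\ut y-\set{x}$, contractible by admissibility (deleting it is again a strong deformation retraction). Reversing this deletion order gives the addition order, and it guarantees that when $y$ is added its link is exactly $\k(\ut y-\set{x})$, and that when $x$ is added the only point above it present is $y$, so its link is the contractible $\k(\ut x\circledast\set{y})$. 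Your prescription of adding the pairs along a linear extension of $\h_M(X)$ produces the opposite order: if $x_j\prec y_k$ with $j\neq k$, then $\h_M(X)$ contains the directed path $y_k\to x_j\to y_j$, so a linear extension lists pair $k$ before pair $j$; but pair $j$ must be added first, otherwise the link of $y_k$ at the moment it is adjoined is $\k(\ut {y_k}-\set{x_k,x_j})$ rather than the contractible $\k(\ut {y_k}-\set{x_k})$. Finally, the region you propose to push off, $y_k\bigl(x_k\,\k(\ut{y_k}-\set{x_k})\bigr)$, is not a subcomplex of $\k(X)$ in general, since $x_k$ need not be comparable to the points of $\ut{y_k}-\set{x_k}$; this is precisely why, in the non-regular setting, matched edges do not correspond to collapses and the two-step beat-point/contractible-link argument is needed.
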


\begin{proof}
Let $x\in X$ be a source  node of $\h_M(X)$ with maximum $f$-value (i.e. $f(x)\geq f(z)$ for every source node $z$). If $x$ is a maximal point of $X$, then it is critical. Note that $\k(X)=\k(X-\set{x})\cup x\k(\ut x)$ with $\k(X-\set{x})\cap x\k(\ut x)=\k(\ut x)$. That is, $\k(X)$ is obtained from $\k(X-\set{x})$ by attaching the cone $x\k(\ut x)$ with apex $x$ over the complex of the descending link of $x$. Since $x$ is a critical maximal point, if we remove it from $X$, the Morse matching $M$ restricts to an admissible Morse matching for the Morse function $f$ on the subposet $X-\set{x}$. Note also that removing the point $x$ does not affect the descending links of the remaining points.

If $x$ is not a maximal point, there exists $y\in X$ with $x\prec y$. Since $x$ is a source node, it follows that $(x,y)\in M$ and that there is no $z\neq y$ such that $x\prec z$. Suppose $y$ is not a maximal point of $X$. Then there is an element $w\in X$ such that $x\prec y\prec w$. Since $(x,y)\in M$, then $(y,w)\notin M$. Let $s$ be a source node of $\h_M(X)$ such that $s=x_0,\ldots,x_r=w$ is a directed path in $\h_M(X)$. By Lemma \ref{mainlemma}, $f(s)\not< f(y)$ and then, $f(x)< f(y) \leq f(s)$ which contradicts the maximality of the $f$-value of $x$. It follows that $y$ is a maximal point of $X$. Since $y$ is the unique point of $X$ which covers $x$, then  $\k(X-\set{x})\subset \k(X)$ is a strong deformation retract (see for example \cite[Prop. 1.3.4]{b1}). This kind of points are called beat points, and the deletion of such a point does not affect the homotopy type.

On the other hand, the descending link of the point $y$ in the poset $X-\set{x}$ is $\ut y^{X-\set{x}}=\ut y^X-\set{x}$, which is homotopically trivial since the edge is admissible. Here we write   $\ut y^{X-\set{x}}$ and $\ut y^X$ to distinguish whether these subposets are considered in $X-\set{x}$ or in $X$. Since $\k(X-\set{x})$ can be obtained from $\k(X-\set{x,y})$ by gluing a cone over the contractible descending link $\k(\ut y^{X-\set{x}})$, then $\k(X-\set{x,y})\simeq \k(X-\set{x})\simeq \k(X)$, which means that we can remove both points without altering the homotopy type of the order complex. Moreover, since $y$ is a maximal point and $x$ is only covered by $y$, the  Morse matching $M$ restricts to an admissible Morse matching for the Morse function $f$ on the subposet $X-\set{x,y}$, and the removing of the points $x,y$ does not affect the descending links of the remaining points.

We can repeat the argument above until we remove all the points of the poset. Note that the last point that we remove is a critical point of height $0$. This procedure allows us to define the following. Let $t_0<t_1<\ldots<t_r \in\R$ be the values of $f$ in increasing order. We define a filtration of $X$ by subposets
$$X_0\subseteq \ti X_0\subseteq X_1\subseteq \ti X_1\subseteq \ldots \subseteq X_i\subseteq\ti X_i\subseteq\ldots \subseteq X_r=X$$
where $X_0$ is the subposets of critical points with $f$-value $t_0$ (note that this is a discrete subposet), $\ti X_0$ is obtained from $X_0$ by adding the points $x,y\in X$ such that $(x,y)\in M$ and $f(x)=t_0$, in the inverse ordering that they were removed. In general, $X_i$ is the subposet obtained from $\ti X_{i-1}$ by adding the critical points of $f$-value $t_i$ and $\ti X_i$  is obtained from $X_i$ by adding the points $x,y$ such that $(x,y)\in M$ and $f(x)=t_i$ (in the inverse ordering that they were removed). Note that $X_i\subseteq \ti X_i$ is a strong deformation retract at the level of complexes, and therefore $\k(X_i)$ is homotopy equivalent to $\k(X_{i-1})\bigcup_{x \in C_i} x\k(\ut x)$, where $C_i$ is the set of critical points of $f$-value $t_i$. This follows from the gluing theorem for homotopy equivalences (see for example \cite[Section 7.4]{bro}). The cone over the descending link of each critical point is glued via the deformation retraction $\k(X_{i-1})\subseteq \k(\ti X_{i-1})$.
 \end{proof}

 The Morse function provides a convenient ordering for adding the points one by one. Note that the cones corresponding to critical points with the same $f$-value can be glued in any order (or all of them at the same time) (cf. \cite{zar}).

\begin{remark}
	With the notation of the proof of Theorem \ref{main}, if the descending links of all critical points of $f$-value $t_i$ are homotopically trivial, then $\k(X_i)$ is homotopy equivalent to $\k(X_{i-1})$. As an immediate consequence, if the descending links of all critical points of height greater than $0$ are homotopically trivial, then $\k(X)$ is homotopy equivalent to a discrete space (the critical points of height $0$).
\end{remark}

\begin{remark}
If the descending links of the critical points are weak homotopy equivalent to spheres, then $\k(X)$ is homotopy equivalent to a CW-complex with one cell for each critical point. So, in particular, Theorem \ref{main} extends the theory of Forman for regular CW-complexes and our previous results on $h$-regular posets \cite{min}. 
\end{remark}

\begin{ej}\label{primerejemplo} The poset shown in Figure \ref{nuevo} is a model of $S^1\vee S^2$ (i.e. its order complex is homotopy equivalent to $S^1\vee S^2$). Note that the subposet $X_0$ consists of a single point (the unique critical point of minimum $f$-value), $X_1$ is homotopically trivial since we do not cross a critical level, $X_2$ is a model of $S^1\vee S^1$ since it is obtained from $X_1$ by a adjoining a cone over three different points. Finally $X=X_3$ is a model of a complex obtained from $X_2$ by attaching a cone over two copies of $S^1$, one of these copies is a boundary of a model of a disc in $\ti X_2$ (and this creates a $2$-sphere), the other is one of the $1$-spheres in $S^1\vee S^1\simeq X_2$ (then the cone homotopically kills that copy of $S^1$). This shows that $X$ is a model of $S^1\vee S^2$. 
	\end{ej}
 
\begin{ej} Figure \ref{nohreg} shows a non $h$-regular poset with a Morse function, which is not the usual height function, and an admissible Morse matching. The same Morse matching would not be admissible for the usual height function.  An easy inspection of the poset shows that it is a model of $S^2$. The descending link of the unique critical point with $f$-value $5$ is a model of $S^1$, and the subposet $X_2$ is clearly contractible. Note that, since there are no critical points with $f$-values between $2$ and $5$, and $\k(X_2)$ is contractible, $\k(X)=\k(X_5)$ is homotopy equivalent to the suspension of the descending link of the critical point with $f$-value $5$.

	\begin{figure}[h]
		\begin{displaymath}
			\xymatrix@C=5pt{
				& 4\bullet  \ar@{-}[drr] & & & 5 \bullet \ar@{-}[ddllll] \ar@{-}[dr]  & &  & 5\bullet  \ar@{-}[dllll]  \\
				& & & 2\bullet \ar@{-}[dr] \ar@{-}[ddrrrr] & & 4\bullet  \ar@{-->}[urr] \ar@{-}[dl] \ar@{-}[ddrr]\\
				3\bullet \ar@{-->}[uur] \ar@{-}[d]\ar@{-}[drrr] \ar@{-}[drrrrrrrrr]&&&& 1 \bullet  \ar@{-}[dl] 
				&&\\
				0\bullet \ar@{-}[uurrr] &&& 0\bullet  
				& & & & 0\bullet  && 0\bullet \ar@{-->}[ulllll]\\
			}
		\end{displaymath}
		\caption{An admissible Morse matching for a Morse function on a model of $S^2$.\label{nohreg}}
	\end{figure}
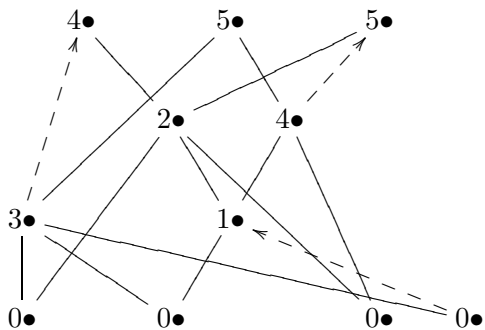

\end{ej}

\begin{ej}
The posets of Figure \ref{suspensiones} admit admissible Morse matchings for Morse functions with only two critical points: one minimal point and one maximal point $x$. This implies, in both cases, that $\k(X-\set{x})$ is contractible, and therefore $\k(X)=\k(X-\{x\})\cup x\k(\ut x)$ is homotopy equivalent to the suspension of the descending link $\Sigma (\k(\ut x))$. The poset on the left is a model of $\Sigma(S^1\vee S^1)=S^2\vee S^2$ and the poset on the right is a model of  $\Sigma(S^1\vee S^2)=S^2\vee S^3$.

	\begin{figure}[h]
		\begin{displaymath}
			\xymatrix@C=13pt{
&  &  & \\
2\bullet \ar@{-}[d] & 2\bullet^x \ar@{-}[dl] \ar@{-}[d] \ar@{-}[dr] & 2\bullet \ar@{-}[d]\\
1\bullet \ar@{-}[drr] & 1\bullet \ar@{-}[dl] \ar@{-}[dr] \ar@{-->}[ur] & 1\bullet \ar@{-}[dll] \ar@{-}[d] \ar@{-->}[ull]\\
0\bullet \ar@{-->}[u] &  & 0\bullet \\}
\hspace{3cm}	
		\xymatrix@C=13pt{
3\bullet^x  \ar@{-}[d]  \ar@{-}[drr] & & 3\bullet  \ar@{-}[dll] \ar@{-}[d]\\
2\bullet \ar@{-}[dr] & 2\bullet \ar@{-}[dl] \ar@{-}[d] \ar@{-}[ddr] \ar@{-->}[ur] & 1\bullet \ar@{-}[dd]\\
1\bullet \ar@{-}[d]  \ar@{-}[dr] \ar@{-->}[u] & 1\bullet \ar@{-}[d]  & \\
0\bullet \ar@{-->}[ur] & 0\bullet  \ar@{-->}[uur]  & 0\bullet \\}
		\end{displaymath}
		\caption{Posets with two critical points.\label{suspensiones}}
	\end{figure}
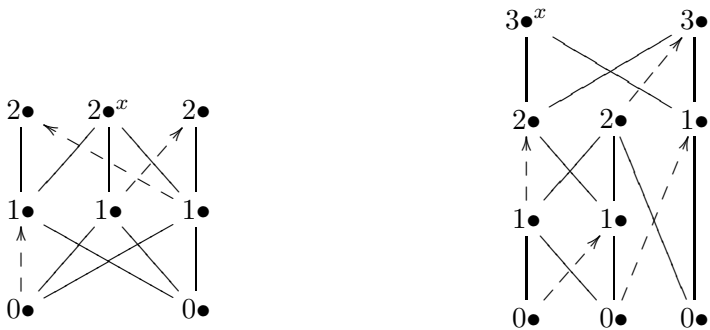

\end{ej}
\begin{definition}
	We say that two Morse functions $f,g:X\to\R$ are equivalent if for every $x,y\in X$, $f(x)<f(y)$ if and only if $g(x)<g(y)$. 
\end{definition}

\begin{remark}\label{equivalent}
	Note that if $f,g$ are equivalent Morse functions, any admissible Morse matching for $f$ is also admissible for $g$. Also, the subposets $X_i$ of the filtration in Theorem \ref{main} are the same for both functions.This means that one can replace a given function by an equivalent one without affecting the critical points and the filtration. This is convenient, for example, when working with the join (or ordinal sum) of posets, as we will see below.
\end{remark}

\section{Joins, opposite posets and relative Morse theory}

\subsection*{Morse theory on joins}

Recall that the simplicial join of two (disjoint) simplicial complexes $K,T$ is $K* T=K\cup T\cup\set{\sigma\cup\tau\ | \ \sigma\in K, \ \tau\in T}$.  The join (or ordinal sum)  $X\circledast Y$ of two posets $X$ and $Y$ is the disjoint union $X\sqcup Y$ keeping the giving ordering within $X$ and $Y$ and setting $x\leq y$ for every $x\in X$ and $y\in Y$. Clearly, $\k(X\circledast Y)=\k(X)* \k(Y)$ (see \cite{b1}). Note that the face poset $\x(K*T)$ of a join of simplicial complexes is not the join of the face posets $\x(K)\circledast \x(T)$. In general  $\x(K)\circledast \x(T)$ is much smaller than the face poset of the join. More important, the join of two regular posets is not longer regular, which implies that Forman's Morse theory cannot be applied to joins of posets. The same happens with $h$-regular posets. However, in our more general setting, given Morse functions and admissible Morse matchings on two posets $X,Y$, there exists an induce Morse function and an admissible Morse matching on the join $X\circledast Y$. This allows us to describe the topology of the join in terms of the critical points of $X$ and $Y$.

Given Morse functions $f:X\to \R$ and $g:Y\to\R$ with admissible Morse matchings $M$ and $N$, we first replace $g$, if necessary, by an equivalent Morse function (which we still denote $g$) such that $f(x)<g(y)$ for all $x\in X, y\in Y$. This can be done by adding a constant to $g$. By Remark \ref{equivalent} this does not affect the admissibility of the Morse matching $N$ neither the critical points and the filtration. We consider the function $f\sqcup g:X\circledast Y\to \R$ which is $f$ on the points of $X$ and $g$ on $Y$. Note that it is a Morse function. The matching $M \sqcup N$ is clearly a Morse matching, and it is admissible (for $f\sqcup g$)  because, for every $(x,x')\in M$, $\ut {x'}^{X\circledast Y}-\set{x}=\ut {x'}^{X}-\set{x}$ and for every $(y,y')\in N$,  $\ut {y'}^{X\circledast Y}-\set{y}= X\circledast (\ut {y'}^{Y}-\set{y})$, which is homotopically trivial since $\ut {y'}^{Y}-\set{y}$ is (see for example \cite{b1}). The critical points of the induced matching are the union of the critical points of $M$ and $N$, the descending links of the critical points in $X$ are  $\ut {x}^{X\circledast Y}=\ut {x}^{X}$ but the descending links of the critical points in $Y$ are  $\ut {y}^{X\circledast Y}= X\circledast \ut {y}^{Y}$. In some cases, the induced matching $M\sqcup N$ can be refined and one can obtain an admissible matching (for the same function $f\sqcup g$) with fewer critical points, as the following example shows.

\begin{ej}
	Figure \ref{join} shows admissible Morse matchings for the height functions on a model $X$ of $S^1\vee S^1$ and a model $Y$ of $S^1$, and a refinement of the induced Morse matching on the join $X\circledast Y$ (a model of $S^3\vee S^3$) with only two critical points. The descending link of the maximal critical point in $X\circledast Y$ is a model of $S^2\vee S^2$.
	
	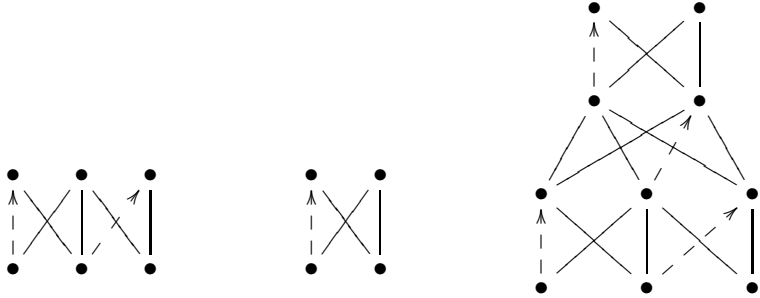
\begin{figure}[h]
		\begin{displaymath}
			\xymatrix@C=14pt{
				&  &  & \\
				& & &\\
				\bullet  \ar@{-}[dr]& \bullet \ar@{-}[dl] \ar@{-}[d] \ar@{-}[dr] & \bullet \ar@{-}[d]\\
				\bullet \ar@{-->}[u] & \bullet \ar@{-->}[ur] & \bullet \\}
			\hspace{1cm}	
			\xymatrix@C=14pt{
				& & \\
				& & \\
				\bullet  \ar@{-}[dr]& \bullet \ar@{-}[dl] \ar@{-}[d]  \\
			 \bullet \ar@{-->}[u] & \bullet \\}
		\hspace{1cm}	
			\xymatrix@C=8pt{
		&	\bullet  \ar@{-}[drr]& & \bullet \ar@{-}[dll] \ar@{-}[d]  \\
		& \bullet \ar@{-->}[u] \ar@{-}[dl] \ar@{-}[dr] \ar@{-}[drrr] & & \bullet \ar@{-}[dlll] \ar@{-}[dr]\\
			\bullet  \ar@{-}[drr]& & \bullet \ar@{-}[dll] \ar@{-}[d] \ar@{-}[drr] \ar@{-->}[ur]& &\bullet \ar@{-}[d]\\
			\bullet \ar@{-->}[u] & & \bullet \ar@{-->}[urr] & & \bullet \\}
		\end{displaymath}
		\caption{Refinement of the induced Morse matching on a join.\label{join}}
	\end{figure}
\end{ej}

\subsection*{Opposite posets}

Recall that the opposite poset $X^{op}$ of $X$ consists of the same underlying set but with the inverse order. Clearly, $\k(X^{op})=\k(X)$. In order to investigate the topology of $\k(X)$, sometimes it is convenient to analyze Morse matchings and functions on  $X^{op}$ instead of $X$, since it may admit matchings and functions with fewer critical points or critical points with nicer descending links. Note that in general the opposite of a regular posets is not longer regular, so we cannot apply Forman's theory in $X^{op}$.

\begin{ej}
	Figure \ref{opposite} shows a poset $X$ (on the left) and its opposite $X^{op}$ (on the right). Note that an optimal admissible Morse matching on $X$ (i.e. an admissible matching for some Morse function with the fewest number of critical points) has $4$ critical points. On the other hand, $X^{op}$ admits an admissible Morse matching for the height function with only two critical points. The descending link of the maximal critical point in $X^{op}$ is a model of $S^1$, which implies that $\k(X)=\k(X^{op})$  has the homotopy type of $S^2$.

	 	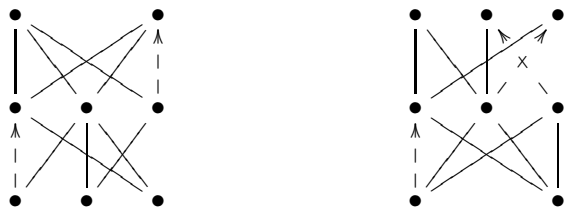
\begin{figure}[h]
	 	\begin{displaymath}
	 		\xymatrix@C=15pt{
	 			\bullet  \ar@{-}[d]\ar@{-}[dr]\ar@{-}[drr]& &  \bullet \ar@{-}[dll] \ar@{-}[dl] \\
	 			\bullet \ar@{-}[drr] & \bullet \ar@{-}[dl]\ar@{-}[d]\ar@{-}[dr] & \bullet \ar@{-->}[u]\ar@{-}[dl]\\
 			\bullet \ar@{-->}[u] & \bullet & \bullet}
	 		\hspace{3cm}	
	 		\xymatrix@C=15pt{
	 			\bullet  \ar@{-}[d]\ar@{-}[dr]& \bullet \ar@{-}[d]  & \bullet \ar@{-}[dll]  \\
	 			\bullet \ar@{-}[drr] & \bullet \ar@{-}[dl]\ar@{-}[dr] \ar@{-->}[ur] & \bullet \ar@{-}[dll]\ar@{-}[d]\ar@{-->}[ul]\\
	 			\bullet \ar@{-->}[u] & & \bullet \\}
	 	\end{displaymath}
	 	\caption{A poset $X$ on the left and its opposite $X^{op}$ on the right with a matching with fewer critical points.\label{opposite}}
	 \end{figure}
 \end{ej}

\subsection*{Relative Morse theory} We extend the results of the previous section to pairs $(X,A)$ with $X$ a (finite) poset and $A\subset X$ a down-set (or, equivalently, an open subset, if $X$ is viewed as a finite topological space). When $A$ is the empty set, we recover the result of Theorem \ref{main}. 
Given a down-set $A\subset X$, a Morse function on the pair $(X,A)$ is simply a function $f:A^c\to \R$, where $A^c=X\setminus A$ is the complement of $A$ in $X$ (viewed as a subposet of $X$), such that  for $x\prec y$ in $A^c$, $f(x)<f(y)$. Since $A$ is a down-set, the Hasse diagram of the complement $\h(A^c)$ is a subdiagram of $\h(X)$. An edge  $(x,y)\in \h(A^c)$ is called admissible for $f$ if it satisfies the same two conditions as in the absolute case:

\begin{enumerate}
	\item $\ut y^X-\set{x}$ is homotopically trivial, 
	\item there is no $z\in A^c$ such that $f(x)<f(z)<f(y)$.
\end{enumerate}
Similarly as before, we consider admissible Morse matchings $M$, i.e we require $\h_M(A^c)$ to be acyclic and every edge in $M$ be admissible for $f$. The proof of the relative case follows the ideas of Theorem \ref{main}.

\begin{thm}\label{relative}
Let $A\subset X$ be a down-set, $f:A^c\to \R$ a Morse function, and $M$ an admissible Morse matching for $f$. Then the order complex $\k(X)$ is homotopy equivalent to a CW-complex constructed from $\k(A)$ by inductively gluing cones over the descending links of the critical points. Concretely, if $t_0<t_1<\ldots<t_r \in\R$ are the values of $f$ and $C_i$ denotes the set of critical points $x\in A^c$ with $f(x)=t_i$, there is a filtration of $X$ by subposets
$A=\ti X_{-1}\subseteq X_0\subseteq \ti X_0\subseteq X_1\subseteq \ldots \subseteq X_i\subseteq \ti X_i\subseteq \ldots \subseteq X_r=X$, where $\k(X_i)=\k(\ti X_{i-1})\bigcup_{x \in C_i} x\k(\ut x)$, and $\k(X_i)\subseteq \k(\ti X_i)$ is a strong deformation retract for all $i$.
\end{thm}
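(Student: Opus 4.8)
The plan is to repeat, essentially verbatim, the inductive deletion argument proving Theorem \ref{main}, but to carry it out inside the subdiagram $\h_M(A^c)$ and to stop the process the moment every point of $A^c$ has been deleted, so that it terminates at the down-set $A$ instead of at a discrete poset. The filtration is then read off from the deletions in reverse order, exactly as before, with $\ti X_{-1}:=A$ as its bottom term.

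First I would record the elementary facts about down-sets that make this transfer legitimate. Since $A$ is a down-set, every element of $X$ lying above a point of $A^c$ again lies in $A^c$; hence the covers of a point of $A^c$ are the same whether computed in $A^c$ or in $X$, and $\h(A^c)$ is an induced subdiagram of $\h(X)$. Consequently Lemma \ref{mainlemma} holds verbatim for directed paths in $\h_M(A^c)$: its proof uses only the defining inequality of a Morse function and condition $(2)$ of admissibility, both of which read the same in the relative setting, with ``$z\in A^c$'' replacing ``$z\in X$''. Finally, the descending links occurring in the statement are the genuine ones $\ut x=\ut x^X$ computed in $X$; this is harmless, because the deletion process only removes points of $A^c$, never of $A$, and at each step it removes only maximal points (and beat points), neither of which changes the descending link of a surviving point.

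Next comes the inductive step, which mirrors that of Theorem \ref{main}. Let $x\in A^c$ be a source node of $\h_M(A^c)$ of maximal $f$-value. If $x$ has no cover in $A^c$, then by the previous paragraph $x$ is maximal in $X$ and is critical; then $\k(X)=\k(X\setminus\set{x})\cup x\k(\ut x)$ with intersection $\k(\ut x)$, the set $A$ is still a down-set of $X\setminus\set{x}$, and $M$ restricts to an admissible Morse matching for $f$ on the complement $A^c\setminus\set{x}$ of $A$ in $X\setminus\set{x}$. If $x$ has a cover $y$ in $A^c$, then, since $x$ is a source, $(x,y)\in M$ and $y$ is the unique cover of $x$; using the relative Lemma \ref{mainlemma} and the maximality of $f(x)$ among source nodes, exactly as in Theorem \ref{main}, one shows that $y$ is maximal in $X$. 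Thus $y$ is the unique point of $X$ covering $x$, so $x$ is a beat point of $X$ and $\k(X\setminus\set{x})\subseteq\k(X)$ is a strong deformation retract; moreover the descending link of $y$ in $X\setminus\set{x}$ is $\ut y^X\setminus\set{x}$, which is homotopically trivial by admissibility of $(x,y)$, so deleting the maximal point $y$ from $X\setminus\set{x}$ is again a strong deformation retract at the level of order complexes. In either case $A$ remains a down-set, the restriction of $M$ is an admissible Morse matching for $f$ on the complement of $A$ in the smaller poset, and the surviving descending links are unchanged.

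Iterating, $A^c$ is exhausted and one is left with $A$. Reading the deletions in reverse order yields the filtration $A=\ti X_{-1}\subseteq X_0\subseteq\ti X_0\subseteq\cdots\subseteq X_r=X$, where $X_i$ is obtained from $\ti X_{i-1}$ by adjoining the critical points of $f$-value $t_i$ and $\ti X_i$ is obtained from $X_i$ by adjoining the matched pairs $(x,y)\in M$ with $f(x)=t_i$. By construction $\k(X_i)=\k(\ti X_{i-1})\bigcup_{x\in C_i}x\k(\ut x)$ and $\k(X_i)\subseteq\k(\ti X_i)$ is a strong deformation retract, whence $\k(X_i)\simeq\k(X_{i-1})\bigcup_{x\in C_i}x\k(\ut x)$ by the gluing theorem for homotopy equivalences, the cones being attached along the deformation retraction $\k(X_{i-1})\subseteq\k(\ti X_{i-1})$. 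Taking $A=\emptyset$ recovers Theorem \ref{main}. The only genuinely new point to watch is the bookkeeping at the bottom of the filtration: one must check that $\k(A)$ survives untouched as a subcomplex throughout (immediate, since no point of $A$ is ever deleted and $M$ is a matching on $\h(A^c)$) and that the cones over the descending links of the minimal critical points $x\in C_0$ are glued onto $\k(A)$ itself, which holds because such an $x$ has minimal $f$-value in $A^c$, forcing $\ut x^X\subseteq A$. I expect this bookkeeping, rather than any new topological input, to be the main (and only mild) obstacle.
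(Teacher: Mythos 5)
Your proposal is correct and follows essentially the same route as the paper's own proof: run the deletion argument of Theorem \ref{main} on source nodes of $\h_M(A^c)$ of maximal $f$-value, use the down-set property of $A$ to ensure covers of points of $A^c$ stay in $A^c$ (so Lemma \ref{mainlemma} and the beat-point/maximal-point dichotomy go through verbatim), stop when $A^c$ is exhausted, and read off the filtration in reverse with $\ti X_{-1}=A$. Your closing observation that $\ut x^X\subseteq\ti X_{i-1}$ (e.g. $\ut x^X\subseteq A$ for $x\in C_0$) is exactly the point the paper records at the end of its proof, so nothing is missing.
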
  

\begin{proof}
	We follow the proof of Theorem \ref{main}. Let $x\in A^c$ be a source node of $\h_M(A^c)$ with maximum $f$-value. If $x$ is a maximal point of $X$, then it is critical, and $\k(X)=\k(X-\set{x})\cup x\k(\ut x)$ with $\k(X-\set{x})\cap x\k(\ut x)=\k(\ut x)$. Since $x$ is a critical maximal point, if we remove it from $X$, the Morse matching $M$ restricts to an admissible Morse matching for the Morse function $f$ on the subposet $A^c-\set{x}$. 
	
	If $x$ is not a maximal point, there exists $y\in X$ with $x\prec y$. Since $x\in A^c$ and $A$ is a down-set, then $y\in A^c$, and since $x$ is a source node, then $(x,y)\in M$ and there is no $z\neq y$ such that $x\prec z$. Suppose $y$ is not a maximal point of $X$. Then there is an element $w\in X$ such that $x\prec y\prec w$. Again, since $A$ is down-set, then $w\in A^c$. Since $(x,y)\in M$, then $(y,w)\notin M$. Let $s$ be a source node of $\h_M(A^c)$ such that $s=x_0,\ldots,x_r=w$ is a directed path in $\h_M(A^c)$. By Lemma \ref{mainlemma}, $f(s)\not< f(y)$ and then, $f(x)< f(y) \leq f(s)$ which contradicts the maximality of the $f$-value of $x$. It follows that $y$ is a maximal point of $X$. Since $y$ is the unique point of $X$ which covers $x$, then  $\k(X-\set{x})\subset \k(X)$ is a strong deformation retract.	On the other hand, the descending link of the point $y$ in the poset $X-\set{x}$ is $\ut y^{X-\set{x}}=\ut y^X-\{x\}$, which is homotopically trivial since the edge is admissible. Since $\k(X-\set{x})$ can be obtained from $\k(X-\set{x,y})$ by gluing a cone over the contractible descending link $\k(\ut y^{X-\set{x}})$, then $\k(X-\set{x,y})\simeq \k(X-\set{x})\simeq \k(X)$ and we can remove both points without modifying the homotopy type of the order complex. Moreover, since $y$ is a maximal point and $x$ is only covered by $y$, the  Morse matching $M$ restricts to an admissible Morse matching for the Morse function $f$ on $A^c-\set{x,y}$, and the removing of the points $x,y$ does not affect the descending links of the remaining points.
	
	We repeat the argument until we remove all the points of $A^c$. Now let $t_0<t_1<\ldots<t_r \in\R$ be the values of $f$ in increase order. The filtration
	$$A=\ti X_{-1}\subseteq X_0\subseteq \ti X_0\subseteq X_1\subseteq \ti X_1\subseteq \ldots \subseteq X_i\subseteq\ti X_i\subseteq\ldots \subseteq X_r=X$$
	is defined similarly as before: $X_0$ is obtained from $A$ by adding the critical points of $A^c$ with minimum $f$-value, 
 $\ti X_0$ is obtained from $X_0$ by adding the points $x,y\in A^c$ such that $(x,y)\in M$ and $f(x)=t_0$, in the reverse order that they were removed. In general, $X_i$ is the subposet obtained from $\ti X_{i-1}$ by adding the critical points of $f$-value $t_i$ and $\ti X_i$  is obtained from $X_i$ by adding the points $x,y$ such that $(x,y)\in M$ and $f(x)=t_i$ (in the reverse order that they were removed). Note that for every $x\in C_i$, $\k(\ti X_{i-1}\cup\{x\})=\k(\ti X_{i-1})\cup x\k(\ut x^X)$ since $A$ is a down-set, and
 $\k(\ti X_{i-1})\cap x\k(\ut x^X)=\k(\ut x^X)$.
\end{proof}

\begin{remark}
	In contrast to the absolute case, it is possible to find relative Morse matchings without critical points. In that case, the inclusion $\k(A)\subset \k(X)$ is a homotopy equivalence. In particular, one can find relative Morse matchings without critical points of height $0$. The following example shows a relative Morse matching with a unique critical point of height $2$.
\end{remark}

\begin{ej}
	The poset of Figure \ref{proj} is a model of the projective plane $\R P^2$ (see \cite{b1}). The down-set $A$ is marked with dotted arrows and is a model of $S^1$. The (relative) Morse function is the usual height function and the admissible matching is marked with dashed arrows. There is only one critical point $x$ of height 2 whose descending link is a model of $S^1$. This shows that $\k(X)$ is homotopy equivalent to a CW-complex obtained from $\k(A)$ by attaching a $2$-cell.

	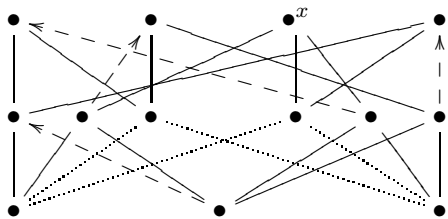
\begin{figure}[h]
		\begin{displaymath}
			\xymatrix@C=14pt{
				\bullet \ar@{-}[d]  \ar@{-}[drr]&  & \bullet \ar@{-}[d] \ar@{-}[drrrr] &  & \bullet^{x} \ar@{-}[dlll]  \ar@{-}[d] \ar@{-}[dr] & & \bullet  \ar@{-}[dllllll] \ar@{-}[dll]\\
				\bullet  \ar@{-}[d] & \bullet \ar@{-->}[ur] \ar@{-}[dl] \ar@{-}[drr] & \bullet  \ar@{.}[dll] \ar@{.}[drrrr] & &
				\bullet  \ar@{.}[dllll] \ar@{.}[drr] & \bullet  \ar@{-}[dll] \ar@{-}[dr]  \ar@{-->}[ulllll] & \bullet  \ar@{-}[d]  \ar@{-}[dlll]  \ar@{-->}[u]\\
				\bullet & & & \bullet  \ar@{-->}[ulll] & & & \bullet
				\\}
		\end{displaymath}
		\caption{A relative Morse matching on a model of $\R P^2$. The down-set $A$ is marked with dotted arrows. \label{proj}}
	\end{figure}

\end{ej}

\end{document}